\theoremstyle{plain}
\newtheorem{thm}{\protect\theoremname}[section]
\theoremstyle{definition}
\newtheorem{defn}[thm]{\protect\definitionname}
\theoremstyle{remark}
\newtheorem{rem}[thm]{\protect\remarkname}
\theoremstyle{plain}
\newtheorem{lem}[thm]{\protect\lemmaname}
\theoremstyle{plain}
\newtheorem{prop}[thm]{\protect\propositionname}
\theoremstyle{plain}
\newtheorem{cor}[thm]{\protect\corollaryname}
\theoremstyle{definition}
\newtheorem*{problem*}{\protect\problemname}
\newtheorem*{funding*}{\protect\fundingname}
\providecommand{\corollaryname}{Corollary}
\providecommand{\definitionname}{Definition}
\providecommand{\lemmaname}{Lemma}
\providecommand{\problemname}{Problem}
\providecommand{\propositionname}{Proposition}
\providecommand{\remarkname}{Remark}
\providecommand{\theoremname}{Theorem}
\providecommand{\fundingname}{Funding}
\begin{document}
\title{Combinatorics behind discriminants of polynomial systems}
\author{Vladislav Pokidkin}
\maketitle
\begin{abstract}
We develop certain combinatorial tools for the study of discriminants
of general systems of polynomial equations. Applying these tools in
a sequel paper, we completely classify components of such discriminants,
generalizing the classical results of Gelfand, Kapranov, and Zelevinsky
on discriminants of one general multivariate polynomial.

The developed tools are targeted at vector subspace arrangements and
naturally extend to their combinatorial abstraction called polymatroids,
which are the subject matter of this work.

We explore relations between polymatroids and their induced matroids
for bases, circuits, cycles, and rank functions. We define contractions
for polymatroids corresponding to contractions of the induced matroids.
With a view towards applications to discriminants, we construct a
new combinatorial structure induced by polymatroids, called BK-sets.
\end{abstract}
\textbf{Keywords:} polymatroids, induced matroids, discriminants of
polynomial systems.

\section*{Introduction}

Combinatorics of tuples of Newton polytopes is displayed in the structure
of solutions for generic polynomial systems \cite{bernshtein_number_1975,khovanskii_newton_1978,esterov_multivariate_2016,khovanskii_newton_2016},
the intersection theory \cite{fulton_introduction_1993,kaveh_mixed_2010},
the theory of resultants \cite{sturmfels_newton_1994,jensen_computing_2013},
and discriminants \cite{gelfand_discriminants_1994,esterov_newton_2010,cattani_mixed_2013,esterov_discriminant_2013,esterov_galois_2019,dickenstein_iterated_2023}.
The current paper develops such combinatorics and provides a sufficient
background to enumerate components of discriminants of polynomial
systems for the sequel work \cite{pokidkin_components_2025}.

The minimal vector subspace to which a given polytope can be shifted
is said to be generated by this polytope. In that sense, a tuple of
Newton polytopes generates a tuple of real subspaces. The tuple of
vector subspaces can be considered as a realizable polymatroid. It
turned out that the necessary combinatorial results for discriminants
hold for non-realizable polymatroids as well.

In 1970, Endomds introduced the notion of polymatroids. An (integer)
\textit{polymatroid} $P$ is the pair $(E,f)$ of a finite \textit{ground
set} $E$ and a rank function $f:2^{E}\rightarrow\mathbb{Z}_{\geq0}$,
which is submodular, monotone, and normalized ($f(\varnothing)=0$).
The theory of polymatroids is developing \cite{csirmaz_cyclic_2020,crowley_bergman_2022,bonin_natural_2023,swartz_polymatroids_2024},
and it interlinks with algebraic geometry \cite{pagaria_hodge_2023,eur_intersection_2024,baker_representation_2025}.

The \textit{defect} of a subset $I$ from the ground set $E$ is the
difference $\delta(I)=f(I)-|I|$.

In a polymatroid, a set is \textit{essential} if every proper subset
has a strictly greater defect than the ground set. A set is\textit{
independent} if the defects of all subsets are non-negative. An independent
set is \textit{irreducible} if the defects of all proper subsets are
positive. A \textit{BK-set} is an independent set with zero defect.

We characterize polymatroids via their ground sets. For example a
dependent polymatroid is a polymatroid with a dependent ground set,
and a BK-polymatroid is a polymatroid for which the ground set is
a BK-set.

Every polymatroid $P=(E,f)$ induces the matroid $M=(E,rk)$ \cite{mcdiarmid_independence_1973,welsh_matroid_1986}
such that 
\[
rk(I)=min\,\{f(J)+|I\backslash J|,\;J\subseteq E\}.
\]

The induced matroid is a contraction of the natural matroid of a polymatroid
\cite{bonin_natural_2023}. 

The main results are:
\begin{enumerate}
\item General results about induced matroids:
\begin{enumerate}
\item (In)dependent sets in a polymatroid are (in)dependent sets in the
induced matroid (Theorem \ref{Theorem. Independent in polymatroid iff idependent in matroid}
and Proposition \ref{Proposition. Dependent in polymatroid dependent in matroid}).
\item All circuits have the defect $-1$ (Corollary \ref{Corollary. Defect of circuits is -1}).
\item All bases of the induced matroid have their unique maximal BK-sets
of the same cardinality for a given polymatroid (Theorem \ref{Theorem. Each basis contains a unique maxamal BK-subtuple}
and Corollary \ref{Corollary. BK-sets in bases have the same cardinality}).
\end{enumerate}
\item Results about cycles:
\begin{enumerate}
\item Bases are BK-sets (Theorem \ref{Theorem. In cyclic, bases are BK}).
\item A subset is cyclic if and only if it is essential (Theorem \ref{Theorem. Cyclic=00003Dessential}).
\item The rank of the induced matroid equals the polymatroid rank (Corollary
\ref{Corollary. cyclic, rk(I)=00003Df(I)}).
\item There exists a unique subset such that the following are equivalent
(Theorem \ref{Theorem. Maximal essential =00003D minimal defect}):
\begin{enumerate}
\item the subset is maximal essential,
\item the subset is maximal cyclic,
\item the subset has a minimal defect and is minimal by inclusion.
\end{enumerate}
\end{enumerate}
\item Results about ranks:
\begin{enumerate}
\item A new formula for the rank of the induced matroid is obtained. This
formula explicitly indicates which cyclic flat we need to use in the
Sims, Bonin, and Mier formula \cite{sims_problems_1980,bonin_lattice_2008}
(Proposition \ref{Proposition. New formulat for rank}). 
\item All subsets containing a basis of the induced matroid have the same
polymatroid rank (Theorem \ref{Theorem. f(B)=00003Df(E)}).
\item The polymatroid rank equals the induced matroid rank for the ground
set if and only if the bases of the induced matroid are BK-sets (Corollary
\ref{Corollary. rk(E)=00003Df(E) iff bases are BK}).
\end{enumerate}
\item Results about contractions:
\begin{enumerate}
\item Contractions of polymatroids are defined, and they correspond to the
contractions of the induced matroids:
\begin{enumerate}
\item Contraction by a BK-set for an independent polymatroid (Proposition
\ref{Proposition. Polymatroid contraction by BK});
\item Contraction by a cyclic flat for a dependent polymatroid (Theorem
\ref{Theorem. Contraction by a cyclic flat}).
\end{enumerate}
\item The contraction of a polymatroid by its maximal cycle is independent
(Corollary \ref{Corollary. Contraction by maximal cyclic is independent}).
\end{enumerate}
\item Results about BK-sets:
\begin{enumerate}
\item For a BK-set in the induced matroid, the set of BK-subsets forms a
distributive lattice by inclusion (Corollary \ref{Corollary. Distributice lattice of BK-subsets}).
Therefore, BK-sets are endowed with the antimatroid structure \cite{edelman_meet-distributive_1980}.
\item Every BK-set admits a partition, encoded by the Birkhoff poset, which
can be arbitrary (Theorem \ref{Theorem. The poset can be arbitratry}).
Every subset of the partition corresponds to an irreducible BK-polymatroid
(Theorem \ref{Theorem. BK-partition}).
\end{enumerate}
\end{enumerate}
With a view towards applications to discriminants, we construct a
certain partition of a vector space over an infinite field by a realizable
polymatroid (Proposition \ref{Proposition. Polymatroid partition of a vector space}).
Section \ref{Section. Disctriminants} discusses the meaning of results
in the context of discriminants of polynomial systems.

\section{Polymatroids and induced matroids}

\subsection{Independent sets in polymatroids and induced matroids}
\begin{thm}
\label{Theorem. Independent in polymatroid iff idependent in matroid}A
polymatroid is independent if and only if the induced matroid is a
boolean algebra.
\end{thm}

\begin{proof}
$\boxed{\Rightarrow}$ It is enough to show that $rk(I)=|I|$ for
every subset $I\subseteq E$. Notice that the inequality $rk(I)\leq|I|$
holds automatically. By contradiction, suppose the strict inequality
$rk(I)<|I|.$ Then there exists a subset $J\subseteq E$ reaching
the rank for the induced matroid
\[
rk(I)=f(J)+|I\backslash J|\geq|J|+|I\backslash J|=|I\cup J|\geq|I|.
\]
The first inequality holds because the polymatroid is independent,
and this leads to the contradiction.

$\boxed{\Leftarrow}$ If the induced matroid is a boolean algebra,
then $rk(I)=|I|$ for every subset $I\subseteq E$: 
\[
f(I)\geq min\,\{f(J)+|I\backslash J|,\;J\subseteq E\}=|I|.
\]
This means that the defects of all subsets are non-negative.
\end{proof}
\begin{cor}
An independent subset in a polymatroid is an independent subset in
the induced matroid.
\end{cor}

\begin{prop}
\label{Proposition. Dependent in polymatroid dependent in matroid}A
set is dependent in a polymatroid if and only if the set is dependent
in the induced matroid.
\end{prop}

\begin{proof}
$\boxed{\Rightarrow}$ A set is dependent in a polymatroid if it contains
a subset $I$ of a negative defect, $f(I)-|I|<0$. Since the polymatroid
rank is not smaller than the induced matroid rank $f(I)\geq rk(I)$,
we can conclude $rk(I)-|I|<0,$ and the set $I$ is dependent in the
induced matroid.

$\boxed{\Leftarrow}$ Consider a dependent set $I$ in the induced
matroid, $rk(I)-|I|<0$. If we restrict the induced matroid to the
set $I$, the rank of $I$ does not change. The inequality is equivalent
to 
\[
rk(I)=min\,\{f(J)+|I\backslash J|,\;J\subseteq I\}=f(S)+|I\backslash S|<|I|
\]
for some choice of a subset $S\subseteq I$. Therefore, the set $I$
contains the subset $S$ of a negative defect, and the set $I$ is
dependent in the polymatroid.
\end{proof}
\begin{cor}
In a polymatroid, subsets of negative defects correspond to dependent
subsets in the induced matroid.
\end{cor}

\subsection{Defects of circuits for induced matroids}
\begin{lem}
\label{Lemma. In essential, f(E)=00003Df(E=00005Ca)}For an essential
polymatroid and an element $e$ from the ground set $E$, it holds:
$f(E)=f(E\backslash e).$
\end{lem}

\begin{proof}
Since the ground set is essential, $\delta(E\backslash e)>\delta(E)$
by definition. This is equivalent to $1>f(E)-f(E\backslash e)$. Since
the function $f$ is monotone, we can have only the equality.
\end{proof}
\begin{lem}
\label{Lemma. Circuits have negative defect}Circuits of the induced
matroid from a polymatroid have negative defects.
\end{lem}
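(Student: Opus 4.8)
The plan is to combine the matroid definition of a circuit with the reformulation recorded just after the definitions --- ``a tuple is linearly independent if and only if the defects of all its subtuples are non-negative'' --- and to pin down \emph{which} subtuple of the circuit is responsible for its linear dependence. No geometry beyond this bookkeeping should be needed.

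First I would record what a circuit is: a circuit $\mathsf{c}$ of the Minkowski matroid $\mathsf{M}(\mathsf{n})$ is a minimal linearly dependent subtuple, i.e.\ $\mathsf{c}\notin\mathcal{I}$ while $\mathsf{c}\setminus\{L\}\in\mathcal{I}$ for every $L\in\mathsf{c}$; in particular $\mathsf{c}\neq\emptyset$ since $\emptyset\in\mathcal{I}$. Because every proper subtuple of $\mathsf{c}$ is contained in some $\mathsf{c}\setminus\{L\}$ and $\mathcal{I}$ is hereditary, every proper subtuple of $\mathsf{c}$ is linearly independent, and hence --- taking such a subtuple as a subtuple of itself in the equivalent definition --- has non-negative defect.

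Next, since $\mathsf{c}$ itself is not linearly independent, the equivalent definition yields a subtuple $\mathsf{c}'\subseteq\mathsf{c}$ with $\delta(\mathsf{c}')<0$. By the previous step $\mathsf{c}'$ cannot be a proper subtuple of $\mathsf{c}$, so $\mathsf{c}'=\mathsf{c}$ and therefore $\delta(\mathsf{c})<0$, which is the assertion.

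I do not anticipate a genuine obstacle; the only delicate point is the hereditary bookkeeping in the second step (that a subtuple of a proper subtuple of $\mathsf{c}$ is again independent), which is precisely where the matroid axiom for $\mathcal{I}$ is used. It is worth flagging that this lemma only gives $\delta(\mathsf{c})<0$: together with the complementary bound $\delta(\mathsf{c})\geq-1$ --- coming from $rk(\mathsf{c})=\mathfrak{c}(\mathsf{c})-1$ (Proposition \ref{Proposition. Rank of Minkowski matroids}) and the obvious inequality $rk(\mathsf{c})\leq\dim\langle\mathsf{c}\rangle$ --- one recovers the sharp value $\delta(\mathsf{c})=-1$ needed in Theorem \ref{Theorem. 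Circuits of a Minkowski matroid have defect -1}.
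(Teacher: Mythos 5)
Your proof of the lemma is correct and follows the paper's argument exactly: every proper subtuple of a circuit is independent and hence has non-negative defect, while the circuit itself, being dependent, must contain some subtuple of negative defect, forcing that subtuple to be the whole circuit. Your closing remark --- that $\delta(\mathsf{c})\geq -1$ follows directly from $rk(\mathsf{c})=\mathfrak{c}(\mathsf{c})-1$ and $rk(\mathsf{c})\leq\dim\langle\mathsf{c}\rangle$, giving $\delta(\mathsf{c})=-1$ without invoking Lemma~\ref{Lemma. Inclusion of an element into the complement for essential tuples} or Lemma~\ref{Lemma. c without L is BK} --- is a valid and somewhat shorter route to Theorem~\ref{Theorem. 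Circuits of a Minkowski matroid have defect -1} than the one the paper takes.
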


\begin{proof}
For a circuit, every proper subset is independent and has a non-negative
defect. By Proposition \ref{Proposition. Dependent in polymatroid dependent in matroid},
a dependent subset contains a subset of a negative defect. Hence there
is only one option for circuits: the defect of the whole circuit is
negative.
\end{proof}
\begin{lem}
\label{Lemma. C=00005Ce is a BK-set}For an element $e$ from a circuit
$C$ of the induced matroid from a polymatroid, the complement $C\backslash e$
is a BK-set.
\end{lem}

\begin{proof}
It is sufficient to show that the independent set $C\backslash e$
has zero defect. Theorem \ref{Theorem. Independent in polymatroid iff idependent in matroid}
implies that circuits are essential sets. Use Lemma \ref{Lemma. In essential, f(E)=00003Df(E=00005Ca)},
write the inequalities
\[
|C\backslash e|\overset{C\backslash e\text{ is independent}}{\leq}f(C\backslash e)=f(C)\overset{C\text{ is dependent}}{<}|C|,
\]
and conclude that the defect of $C\backslash e$ is zero.
\end{proof}
\begin{cor}
\label{Corollary. Defect of circuits is -1}Circuits of the induced
matroid have the defect $-1$.
\end{cor}

\subsection{Defects of bases for induced matroids}
\begin{prop}
\textup{\label{Proposition. Oxley Circuits}\cite{oxley_matroid_2011}}
Suppose $I$ is an independent set in a matroid and $e$ is an element
such that $I\cup e$ is dependent; then there exists a unique circuit
contained in $I\cup e$, and this circuit contains $e$.
\end{prop}

\begin{lem}
\label{Lemma. Value of f equal}Suppose $I$ is an independent set
in the induced matroid from a polymatroid and $e$ is an element such
that $I\cup e$ is dependent; then the values are equal: $f(I)=f(I\cup e).$
\end{lem}

\begin{proof}
By Proposition \ref{Proposition. Oxley Circuits}, there exists a
circuit $C\subseteq I\cup e$ such that it contains the element $e$.
By Lemma \ref{Lemma. In essential, f(E)=00003Df(E=00005Ca)}, $f(C)=f(C\backslash e).$
The polymatroid rank $f$ is submodular, and
\begin{align*}
f(C)+f(I) & \geq f(I\cup C)+f(I\cap C)=f(I\cup e)+f(C\backslash e),\\
f(I) & \geq f(I\cup e).
\end{align*}
Since the polymatroid rank $f$ is monotone, we have the equality
$f(I)=f(I\cup e).$
\end{proof}
Recall the basis-exchange property for matroids: if bases $B$ and
$B'$ are distinct, and $e\in B\backslash B'$, then there exists
an element $e'\in B'\backslash B$ such that $(B\cup e')\backslash e$
is a basis.
\begin{lem}
\label{Lemma. Basis change}Basis change does not change the defect:
$\delta(B)=\delta((B\cup e')\backslash e).$
\end{lem}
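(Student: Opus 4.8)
The plan is to reduce the statement to Lemma \ref{Lemma. Dimensions equal}, which says that adjoining a single subspace that creates a dependence leaves the dimension of the linear span unchanged. Since $\delta(\cdot)=dim\,\langle\cdot\rangle-\mathfrak{c}(\cdot)$ and basis exchange replaces one subspace of $\mathsf{b}$ by exactly one other, we have $\mathfrak{c}((\mathsf{b}\cup L')\backslash L)=\mathfrak{c}(\mathsf{b})$, so it suffices to prove the equality of dimensions $dim\,\langle\mathsf{b}\rangle=dim\,\langle(\mathsf{b}\cup L')\backslash L\rangle$.

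First I would record that $\mathsf{b}\cup L'$ is a dependent subtuple: because $L'\in\mathsf{b}'\backslash\mathsf{b}$, the element $L'$ lies in $\mathsf{n}\backslash\mathsf{b}$, and since $\mathsf{b}$ is a basis, i.e.\ a maximal independent subtuple, the strictly larger tuple $\mathsf{b}\cup L'$ cannot be independent. Applying Lemma \ref{Lemma. Dimensions equal} to the independent set $\mathsf{b}$ and the element $L'$ then gives $dim\,\langle\mathsf{b}\rangle=dim\,\langle\mathsf{b}\cup L'\rangle$. Next I would apply the same lemma to the basis $(\mathsf{b}\cup L')\backslash L$ and the element $L$: as $L\in\mathsf{b}$ while $L'\notin\mathsf{b}$ we have $L\neq L'$, so $L$ is genuinely absent from $(\mathsf{b}\cup L')\backslash L$, and re-adjoining it yields $\mathsf{b}\cup L'$, which is dependent by the previous step; since $(\mathsf{b}\cup L')\backslash L$ is a basis, hence independent, Lemma \ref{Lemma. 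Dimensions equal} gives $dim\,\langle(\mathsf{b}\cup L')\backslash L\rangle=dim\,\langle\mathsf{b}\cup L'\rangle$. Chaining the two equalities yields $dim\,\langle\mathsf{b}\rangle=dim\,\langle(\mathsf{b}\cup L')\backslash L\rangle$, which together with the equality of cardinalities completes the argument.

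There is no deep obstacle in this lemma; it is essentially bookkeeping on top of Lemma \ref{Lemma. Dimensions equal}. The only points that need a moment's care are checking that the hypotheses of that lemma really hold in both applications --- namely that $\mathsf{b}\cup L'$ is dependent, which relies on maximality of the basis $\mathsf{b}$, and that $L\neq L'$, so that removing and then restoring $L$ behaves as expected in the tuple (multiset) $\mathsf{b}\cup L'$.
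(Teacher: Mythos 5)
Your proof is correct and follows essentially the same route as the paper's: apply Lemma \ref{Lemma. Dimensions equal} twice — once to $\mathsf{b}$ with $L'$ adjoined and once to $(\mathsf{b}\cup L')\backslash L$ with $L$ adjoined — to see both have the same span dimension as $\mathsf{b}\cup L'$, and note cardinalities are preserved. The only difference is that you spell out the dependence of $\mathsf{b}\cup L'$ and the condition $L\neq L'$, which the paper leaves implicit.
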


\begin{proof}
Use Lemma \ref{Lemma. Value of f equal} twice:

$\delta((B\cup e')\backslash e)=f((B\cup e')\backslash e)-|(B\cup e')\backslash e|=f(B\cup e')-|B|=f(B)-|B|=\delta(B).$
\end{proof}
\begin{thm}
\label{Theorem. Basis have the same defect}Bases of the induced matroid
have the same non-negative defect.
\end{thm}

\begin{proof}
By Lemma \ref{Lemma. Basis change}, the basis exchange property preserves
the defect. Since bases are linked with each other by a sequence of
basis changes, all bases have the same defect. The defect is non-negative,
because bases are independent.
\end{proof}
\begin{cor}
Bases of the induced matroid have the same polymatroid rank.
\end{cor}

\subsection{BK-sets in a basis}
\begin{lem}
\label{Lemma. The union and intersection of BK-sets are BK-sets}In
an independent polymatroid, unions and intersections of BK-sets are
BK-sets.
\end{lem}

\begin{proof}
In an independent polymatroid, the defects of all subsets are non-negative.
The polymatroid rank function $f$ is submodular:
\begin{align*}
f(I\cup J)+f(I\cap J) & \leq f(I)+f(J)\overset{I,J\text{ are BK}}{=}|I|+|J|=|I\cup J|+|I\cap J|\;\Leftrightarrow\\
\delta(I\cup J)+\delta(I\cap J) & \leq0.
\end{align*}
For non-negative defects, there is only one option: $\delta(I\cup J)=\delta(I\cap J)=0.$
\end{proof}
\begin{cor}
\label{Corollary. Distributice lattice of BK-subsets}For an independent
polymatroid, BK-sets form a distributive lattice by inclusion.
\end{cor}

\begin{rem}
Edelman established that distributive lattices are antimatroids \cite{edelman_meet-distributive_1980}.
This means that each basis of the induced matroid from a polymatroid
is equipped with the antimatroid structure.
\end{rem}

\begin{lem}
\label{Lemma. Ecah basis contains a BK-set}For a dependent polyamtroid,
every basis of the induced matroid contains a non-trivial BK-set.
\end{lem}

\begin{proof}
For a basis $B$ and an element $e$ from the complement $E\backslash B$,
there exists a unique circuit $C$ in $B\cup e$ containing $e$ according
to Proposition \ref{Proposition. Oxley Circuits}. By Lemma \ref{Lemma. C=00005Ce is a BK-set},
the subset $C\backslash e$ is a BK-set in the basis $B$.
\end{proof}
\begin{thm}
\label{Theorem. Each basis contains a unique maxamal BK-subtuple}For
a dependent polymatroid, every basis of the induced matroid contains
a unique BK-set maximal by inclusion.
\end{thm}

\begin{proof}
By Lemma \ref{Lemma. Ecah basis contains a BK-set}, every basis contains
a non-trivial BK-set. According to Lemma \ref{Lemma. The union and intersection of BK-sets are BK-sets},
the union of all BK-sets in a basis is a unique and maximal BK-set
by inclusion.
\end{proof}

\section{Cycles of the induced matroid}

\subsection{Bases of cycles}
\begin{lem}
\label{Lemma. Union with a circuit decreases the defect}For a circuit
$C$ that does not lie in a subset $I$, $\delta(I\cup C)<\delta(C).$
\end{lem}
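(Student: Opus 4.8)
The plan is to peel one carefully chosen element off the circuit so that the comparison reduces to adjoining a BK-tuple to $\mathsf{k}$. First I would pick an element $L\in\mathsf{c}\backslash\mathsf{k}$ — which exists precisely because $\mathsf{c}\not\subseteq\mathsf{k}$ — and set $\mathsf{d}=\mathsf{c}\backslash L$, a BK-tuple by Lemma~\ref{Lemma. c without L is BK}. Since circuits are essential, Lemma~\ref{Lemma. Inclusion of an element into the complement for essential tuples} gives $L\subseteq\langle\mathsf{d}\rangle$, hence $\langle\mathsf{k}\cup\mathsf{c}\rangle=\langle\mathsf{k}\rangle+\langle\mathsf{d}\rangle=\langle\mathsf{k}\cup\mathsf{d}\rangle$; on the other hand $L\notin\mathsf{k}\cup\mathsf{d}$, so $\mathfrak{c}(\mathsf{k}\cup\mathsf{c})=\mathfrak{c}(\mathsf{k}\cup\mathsf{d})+1$. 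Combining the two, $\delta(\mathsf{k}\cup\mathsf{c})=\delta(\mathsf{k}\cup\mathsf{d})-1$, so it suffices to prove $\delta(\mathsf{k}\cup\mathsf{d})\leq\delta(\mathsf{k})$.

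For that last inequality I would write $\delta(\mathsf{k}\cup\mathsf{d})-\delta(\mathsf{k})=\big(dim\,\langle\mathsf{k}\cup\mathsf{d}\rangle-dim\,\langle\mathsf{k}\rangle\big)-\mathfrak{c}(\mathsf{d}\backslash\mathsf{k})$, using $\mathfrak{c}(\mathsf{k}\cup\mathsf{d})-\mathfrak{c}(\mathsf{k})=\mathfrak{c}(\mathsf{d}\backslash\mathsf{k})$. The dimension formula gives $dim\,\langle\mathsf{k}\cup\mathsf{d}\rangle-dim\,\langle\mathsf{k}\rangle=dim\,\langle\mathsf{d}\rangle-dim\,(\langle\mathsf{k}\rangle\cap\langle\mathsf{d}\rangle)$, and since $\langle\mathsf{k}\cap\mathsf{d}\rangle\subseteq\langle\mathsf{k}\rangle\cap\langle\mathsf{d}\rangle$ this is at most $dim\,\langle\mathsf{d}\rangle-dim\,\langle\mathsf{k}\cap\mathsf{d}\rangle$. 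Now $\mathsf{d}$ is BK, so $dim\,\langle\mathsf{d}\rangle=\mathfrak{c}(\mathsf{d})$, and $\mathsf{k}\cap\mathsf{d}$ is a subtuple of the linearly independent tuple $\mathsf{d}$, so $dim\,\langle\mathsf{k}\cap\mathsf{d}\rangle\geq\mathfrak{c}(\mathsf{k}\cap\mathsf{d})$; hence $dim\,\langle\mathsf{k}\cup\mathsf{d}\rangle-dim\,\langle\mathsf{k}\rangle\leq\mathfrak{c}(\mathsf{d})-\mathfrak{c}(\mathsf{k}\cap\mathsf{d})=\mathfrak{c}(\mathsf{d}\backslash\mathsf{k})$. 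Therefore $\delta(\mathsf{k}\cup\mathsf{d})\leq\delta(\mathsf{k})$, and putting everything together $\delta(\mathsf{k}\cup\mathsf{c})=\delta(\mathsf{k}\cup\mathsf{d})-1\leq\delta(\mathsf{k})-1<\delta(\mathsf{k})$ — in fact this yields the slightly sharper bound $\delta(\mathsf{k}\cup\mathsf{c})\leq\delta(\mathsf{k})-1$.

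The only genuine content is the auxiliary inequality $\delta(\mathsf{k}\cup\mathsf{d})\leq\delta(\mathsf{k})$ for a BK-tuple $\mathsf{d}$; the rest is bookkeeping with spans and cardinalities. I expect the main (modest) obstacle to be keeping the chain of inequalities pointed in the right direction — using $\langle\mathsf{k}\cap\mathsf{d}\rangle\subseteq\langle\mathsf{k}\rangle\cap\langle\mathsf{d}\rangle$ as a \emph{lower} bound on $dim\,(\langle\mathsf{k}\rangle\cap\langle\mathsf{d}\rangle)$, which becomes an \emph{upper} bound after the subtraction — and checking that the degenerate cases $\mathsf{k}=\emptyset$ and $\mathsf{k}\cap\mathsf{c}=\emptyset$ need no special treatment, which they do not. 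A variant of the second step rewrites $\delta(\mathsf{k}\cup\mathsf{d})-\delta(\mathsf{k})=\delta(\mathsf{d}/\mathsf{k})$ via Lemma~\ref{Lemma. Quotient tuples} and bounds the defect of the quotient of the BK-tuple $\mathsf{d}$, but the direct dimension count above is shorter.
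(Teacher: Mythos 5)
Your proof is correct. Where the paper proves the lemma in a single line by applying the inclusion--exclusion-type defect identity
\[
\delta(\mathsf{k}\cup\mathsf{c})=\delta(\mathsf{k})+\delta(\mathsf{c})-\delta(\mathsf{k}\cap\mathsf{c})-\bigl(\dim\,\langle\mathsf{k}\rangle\cap\langle\mathsf{c}\rangle-\dim\,\langle\mathsf{k}\cap\mathsf{c}\rangle\bigr)
\]
directly to $\mathsf{k}$ and $\mathsf{c}$, then reading off $\delta(\mathsf{c})<0$, $\delta(\mathsf{k}\cap\mathsf{c})\geq 0$ (proper subtuple of a circuit) and $\langle\mathsf{k}\cap\mathsf{c}\rangle\subseteq\langle\mathsf{k}\rangle\cap\langle\mathsf{c}\rangle$, you first strip off an element $L\in\mathsf{c}\backslash\mathsf{k}$, pass to the BK-tuple $\mathsf{d}=\mathsf{c}\backslash L$ via Lemmas~\ref{Lemma. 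Inclusion of an element into the complement for essential tuples} and~\ref{Lemma. c without L is BK}, and only then run the very same dimension count, now with $\mathsf{d}$ in place of $\mathsf{c}$ and $\delta(\mathsf{d})=0$ in place of $\delta(\mathsf{c})<0$. The two arguments share the same engine (the span-vs-intersection inequality), and both in fact deliver the sharp bound $\delta(\mathsf{k}\cup\mathsf{c})\leq\delta(\mathsf{k})-1$ once one uses $\delta(\mathsf{c})=-1$ from Theorem~\ref{Theorem. Circuits of a Minkowski matroid have defect -1}; the paper's version is shorter because it needs only the sign of $\delta(\mathsf{c})$ (Lemma~\ref{Lemma. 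Circuits have negative defect}), while your reduction isolates the ``$-1$'' explicitly at the cost of invoking two extra lemmas about circuits. Your remark that the auxiliary inequality $\delta(\mathsf{k}\cup\mathsf{d})\leq\delta(\mathsf{k})$ is the only real content is accurate, but note it genuinely depends on $\mathsf{d}$ being BK (defect exactly $0$), not merely linearly independent: for a general linearly independent $\mathsf{d}$ the comparison $\delta(\mathsf{d})\leq\delta(\mathsf{k}\cap\mathsf{d})$ can fail.
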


\begin{proof}
For such a circuit $C$, the intersection $I\cap C$ is independent
of a non-negative defect.
\begin{align*}
\delta(I\cup C) & =f(I\cup C)-|I\cup C|=\\
 & =\delta(I)+\underbrace{\delta(C)}_{-1}-\underbrace{\delta(I\cap C)}_{\geq0}-\underbrace{(f(I)+f(C)-f(I\cup C)-f(I\cap C))}_{\geq0}<\delta(I)
\end{align*}
We use Corollary \ref{Corollary. Defect of circuits is -1} and the
submodularity of $f$.
\end{proof}
\begin{cor}
A union of circuits has a negative defect.
\end{cor}

Recall that a \textit{cycle} \textit{(cyclic} \textit{subset)} is
a union of circuits, and a \textit{loop}/\textit{coloop} is an element
contained in no/every basis.
\begin{cor}
\label{Corollary. Cyclics are essential}In the induced matroid, cycles
are essential.
\end{cor}

\begin{lem}
\label{Lemma. Cyclics are unions of BK-sets}In the induced matroid,
cycles are unions of BK-sets.
\end{lem}

\begin{proof}
By Lemma \ref{Lemma. C=00005Ce is a BK-set}, every circuit $C$ is
a union of $|C|$-number BK-sets.
\end{proof}
\begin{thm}
\label{Theorem. In cyclic, bases are BK}In a cyclic polymatroid,
bases are BK-sets.
\end{thm}

\begin{proof}
A cyclic ground set $E$ is a union of BK-sets by Lemma \ref{Lemma. Cyclics are unions of BK-sets}
and is a union of circuits by definition. For a fixed element $e\in E,$
we can represent the ground set $E$ via the union of circuits as
follows:
\[
E=\left(\underset{e\in C}{\cup}C\right)\cup\left(\underset{e\notin C'}{\cup}C'\right).
\]
The union $\underset{e\notin C'}{\cup}C'$ is a cycle; hence, it is
a union of BK-sets by Lemma \ref{Lemma. Cyclics are unions of BK-sets}.
According to Lemma \ref{Lemma. C=00005Ce is a BK-set}, $C\backslash e$
is a BK-set for every circuit $C$ containing $e.$ Therefore, the
complement
\[
E\backslash e=\left(\underset{e\in C}{\cup}(C\backslash e)\right)\cup\left(\underset{e\notin C'}{\cup}C'\right)
\]
is a union of BK-sets. If the complement $E\backslash e$ is dependent,
we repeat the procedure and remove one by one elements until we obtain
an independent set $I\subset E$. This independent set $I$ is a basis
automatically, is a union of BK-sets by induction, and is a BK-set
by Lemma \ref{Lemma. The union and intersection of BK-sets are BK-sets}.
\end{proof}
\begin{cor}
If bases of the induced matroid from a polymatroid have a positive
defect, then the induced matroid contains a coloop.
\end{cor}

\begin{cor}
\label{Corollary. BK-sets in bases have the same cardinality}In a
dependent polymatroid, the unique maximal BK-sets in the bases of
the induced matroid have the same cardinality.
\end{cor}

\begin{proof}
For the maximal cycle $E'$ in the ground set $E$, the complement
$E\backslash E'$ consists of coloops. Then every basis $B$ of the
induced matroid is the union $B'\cup(E\backslash E'),$ where $B'=B\cap E'$.
The set $B'$ is a basis for the restricted matroid on $E'$; hence,
$B'$ is a BK-set by Theorem \ref{Theorem. In cyclic, bases are BK}.
Thus the set $B'$ is contained in the maximal BK-set $I$ in the
basis $B$, and $B'=I\cap E'$. This means that the complement $J=I\backslash B'$
consists of coloops. Then the set $J$ is contained in every basis
of the induced matroid on $E$. For the restriction of the induced
matroid to the set $E'\cup J$, the set $I$ is a basis and a BK-set.
Theorem \ref{Theorem. Basis have the same defect} implies that all
bases for the set $E'\cup J$ are BK-sets. Therefore, for every basis
$B$ on $E$, its maximal BK-set $I$ has the same fixed cardinality,
and it equals the union $B'\cup J,$ where $B'=B\cap E'$ is a basis
of $E'$.
\end{proof}

\subsection{Cyclic is equivalent to essential}
\begin{prop}
\label{Proposition. Essential, rk(E)=00003Df(E)}For an essential
polymatroid, the rank of the induced matroid of the ground set $E$
equals the polymatroid rank, $rk(E)=f(E).$
\end{prop}

\begin{proof}
For an essential ground set, the defects of proper subsets are strictly
greater than the defect of the ground set. Then
\[
rk(E)=min\,\{f(J)+|E\backslash J|,\;J\subseteq E\}=|E|+min\,\{\delta(J),\;J\subseteq E\}=|E|+\delta(E)=f(E).
\]
\end{proof}
\begin{prop}
\textup{\label{Proposition. Characterization of coloops}\cite{white_theory_1986}
}An element $e$ from the ground set $E$ of a matroid is a coloop
if and only if it satisfies one of the following:

Bases: $e$ is in every basis;

Circuits: $e$ is in no circuit;

Rank: $rk(E\backslash e)=rk(E)-1$.
\end{prop}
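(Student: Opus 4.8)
Since the paper defines a coloop as an element lying in every basis, the ``Bases'' clause is the working definition, and the task is to show it is equivalent to the ``Rank'' and ``Circuits'' clauses. The plan is to route both equivalences through the rank condition, after first recording the elementary sandwich $rk(E)-1\le rk(E\backslash p)\le rk(E)$: monotonicity of the rank function gives the upper bound, and for any basis $\mathsf{b}$ of $E$ the set $\mathsf{b}\backslash p$ is independent in $E\backslash p$ of cardinality at least $rk(E)-1$, giving the lower bound. Consequently ``$rk(E\backslash p)=rk(E)-1$'' is the same as ``$rk(E\backslash p)<rk(E)$'', which in turn says that no independent set of size $rk(E)$, hence no basis of $E$, is contained in $E\backslash p$ --- exactly the assertion that $p$ lies in every basis. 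This disposes of Bases $\Leftrightarrow$ Rank.

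For Rank $\Leftrightarrow$ Circuits I would argue by contraposition in both directions. If $p$ belongs to a circuit $\mathsf{c}$, then $\mathsf{c}\backslash p$ is independent; extend it to a \emph{maximal} independent subset $I$ of $E\backslash p$. Because $\mathsf{c}\subseteq I\cup p$ and $\mathsf{c}$ is dependent, $I\cup p$ is dependent, so $I$ is already maximal independent in all of $E$ and $rk(E\backslash p)=|I|=rk(E)$; thus the rank condition fails. Conversely, if $rk(E\backslash p)=rk(E)$, choose a basis $\mathsf{b}$ of $E\backslash p$; it has cardinality $rk(E)$, so it is a basis of $E$ and $\mathsf{b}\cup p$ is dependent. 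Proposition \ref{Proposition. Oxley Circuits} then produces a circuit inside $\mathsf{b}\cup p$ containing $p$, so the ``Circuits'' condition fails. Chaining the two halves shows $p$ is in no circuit if and only if $rk(E\backslash p)=rk(E)-1$, and together with the first paragraph this yields all three equivalences.

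This is a classical fact, so there is no genuine obstacle; the only step that requires a little attention is the implication ``$p$ in a circuit $\Rightarrow rk(E\backslash p)=rk(E)$'', where one must extend $\mathsf{c}\backslash p$ to a maximal independent set \emph{within} $E\backslash p$ and then verify that adjoining $p$ leaves it dependent, rather than manipulating an arbitrary basis of $E$. Everything else is bookkeeping with the rank inequality and a single invocation of Proposition \ref{Proposition. Oxley Circuits}.
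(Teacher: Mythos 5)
The paper does not prove this proposition; it is quoted verbatim as a classical fact and attributed to White's monograph \cite{white_theory_1986}, so there is no in-paper argument to compare against. Your proof is correct and is the standard derivation: the sandwich $rk(E)-1\le rk(E\backslash p)\le rk(E)$ (monotonicity for the upper bound, deleting $p$ from a basis for the lower bound) reduces the rank clause to $rk(E\backslash p)<rk(E)$, which is a direct restatement of ``$p$ is in every basis''; and the circuits clause is handled by contraposition in both directions, with the only genuinely non-mechanical step being the extension of $\mathsf{c}\backslash p$ to a maximal independent set inside $E\backslash p$ and the observation that adjoining $p$ keeps it dependent, plus one invocation of Proposition \ref{Proposition. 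Oxley Circuits} for the converse. That step is handled correctly: since $\mathsf{c}\subseteq I\cup p$ is dependent, $I\cup p$ is dependent, so the maximal independent $I\subseteq E\backslash p$ is already maximal in $E$ and $rk(E\backslash p)=|I|=rk(E)$. In short, your proof validly fills in what the paper delegates to the reference, and routing all three clauses through the rank characterization is the economical choice.
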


\begin{lem}
\label{Lemma. An essential dependent polymatroid doesn't contain coloops.}For
an essential dependent polymatroid, the induced matroid does not contain
coloops.
\end{lem}

\begin{proof}
For the maximal cycle $E'$ in the essential ground set $E$, the
complement $E\backslash E'$ consists of coloops. By Proposition \ref{Proposition. Characterization of coloops},
we can write the equality for ranks: $rk(E)-rk(E')=|E\backslash E'|$.
Since the set $E'$ is a cycle, the bases of $E'$ are BK-set by Theorem
\ref{Theorem. In cyclic, bases are BK}. Notice that the matroid rank
does not exceed the polymatroid rank, $rk(E)\leq f(E)$, use Corollary
\ref{Corollary. Cyclics are essential} and Proposition \ref{Proposition. Essential, rk(E)=00003Df(E)},
$rk(E')=f(E')$, and conclude
\[
f(E)-f(E')\geq|E\backslash E'|.
\]
This inequality is equivalent to $\delta(E)\geq\delta(E')$. Therefore,
the ground set is not essential if it contains coloops.
\end{proof}
\begin{thm}
\label{Theorem. Cyclic=00003Dessential}A dependent polymatroid is
essential if and only if it is cyclic.
\end{thm}

\begin{proof}
Corollary \ref{Corollary. Cyclics are essential} and Lemma \ref{Lemma. An essential dependent polymatroid doesn't contain coloops.}.
\end{proof}
\begin{cor}
\label{Corollary. cyclic, rk(I)=00003Df(I)}For a dependent polymatroid,
the rank of the induced matroid of a cycle $I$ equals the polymatroid
rank, $rk(I)=f(I).$
\end{cor}

\begin{proof}
Restrict the induced matroid to the set $I$ and use Proposition \ref{Proposition. Essential, rk(E)=00003Df(E)}
and Theorem \ref{Theorem. Cyclic=00003Dessential}.
\end{proof}
\begin{defn}
The \textit{matroid} $cl_{M}(I)$/\textit{polymatroid} $cl_{P}(I)$
\textit{closure} of a set $I$ is a maximal subset containing $I$
of the same matroid/polymatroid rank.
\end{defn}

\begin{cor}
\label{Corollary. Cyclic, two closures are equal}For a cycle $I$
from a dependent polymatroid, the polymatroid closure $cl_{P}(I)$
equals the induced matroid closure $cl_{M}(I)$.
\end{cor}

\begin{proof}
By Corollary \ref{Corollary. cyclic, rk(I)=00003Df(I)}, the rank
functions are equal in that case.
\end{proof}

\subsection{Maximal essential subset}
\begin{prop}
\label{Proposition. Unique minimal subset of the minimal defect}A
dependent polymatroid contains the unique subset with minimal defect
and minimal by inclusion.
\end{prop}

\begin{proof}
Suppose there exist two distinct such sets, $I$ and $J$; then the
defect of the intersection $I\cap J$ is strictly greater than the
defects of $I$ and $J$. We obtain a contradiction, since the union
$I\cup J$ has a strictly smaller defect:
\begin{align*}
\delta(I\cup J) & =\delta(I)+\underbrace{\delta(J)-\delta(I\cap J)}_{<0}-\underbrace{(f(I)+f(J)-f(I\cup J)-f(I\cap J))}_{\geq0}<\delta(I)=\delta(J).
\end{align*}
\end{proof}
\begin{thm}
\label{Theorem. Maximal essential =00003D minimal defect}In a dependent
polymatroid, the following are equivalent:

a) the subset is maximal essential,

b) the subset is maximal cyclic in the induced matroid,

c) the subset has a minimal defect and minimal by inclusion.
\end{thm}

\begin{proof}
$a)=b)$ by Theorem \ref{Theorem. Cyclic=00003Dessential}. $b)\subseteq c)$
by Lemma \ref{Lemma. Union with a circuit decreases the defect}.

$c)\Rightarrow a)$ by Proposition \ref{Proposition. Unique minimal subset of the minimal defect},
the subset $I$ of minimal defect and minimal by inclusion is unique.
Hence every subset containing $I$ can not be essential by definition.
\end{proof}

\section{Polymatroid and the induced matroid ranks}

Bonin and Mier showed that the set of cyclic flats forms an arbitrary
lattice \cite{bonin_lattice_2008}. The join $\vee$ of cyclic flats
is a closure of the union, and the meet $\wedge$ of cyclic flats
is a maximal cyclic flat inside the intersection. The lattice of cyclic
flats admits an alternative cryptomorphic definition for matroids.

\subsection{Rank of the induced matroid}
\begin{thm}
\textup{\label{Theorem. Sims, Bonin, Mier, rank funciton}(Sims \cite{sims_problems_1980},
Bonin, Mier} \textup{\cite{bonin_lattice_2008})} For a subset $I$
from a matroid with the lattice of cyclic flats $\mathcal{Z}$, the
matroid rank function satisfies the equality:
\[
rk(I)=min\,\{rk(F)+|I\backslash F|,\;F\in\mathcal{Z}\}.
\]
\end{thm}

\begin{prop}
\label{Proposition. New formulat for rank}The rank function of the
induced matroid and the polymatroid rank function are linked with
each other as follows:
\[
rk(I)=f(I')+|I\backslash I'|=|I|+\delta(I'),
\]
where $I'$ is the maximal cycle inside $I$.
\end{prop}

\begin{proof}
If we restrict the induced matroid to the set $I$, the rank does
not change and equals 
\[
rk(I)=min\,\{f(J)+|I\backslash J|,\;J\subseteq I\}=|I|+min\,\{\delta(J),\;J\subseteq I\}=|I|+\delta(I'),
\]
where $I'$ is the maximal cycle inside $I$, which is automatically
flat. The last equality holds by Theorem \ref{Theorem. Maximal essential =00003D minimal defect}.
\end{proof}
\begin{rem}
Proposition \ref{Proposition. New formulat for rank} is coherent
with the formula of Sims, Bonin, and Mier. Firstly, for the lattice
of cyclic flats $\mathcal{Z}$ and a subset $I$ with its maximal
cycle $I'$, we need to choose the matroid closure $cl_{M}(I')$ to
obtain an element from the lattice $\mathcal{Z}$. Secondly, the rank
of the induced matroid equals the polymatroid rank for cycles by Corollary
\ref{Corollary. cyclic, rk(I)=00003Df(I)}, $rk(I')=f(I')$. By the
same reason, thirdly, the polymatroid closure of $cl_{P}(I')$ equals
the matroid closure of $cl_{M}(I')$, and 
\[
rk(I')=rk(cl_{M}(I'))=f(cl_{M}(I'))=f(cl_{P}(I'))=f(I').
\]
\end{rem}

\subsection{Polymatroid rank plateau}
\begin{thm}
\label{Theorem. f(B)=00003Df(E)}All subsets containing a basis of
the induced matroid from a polymatroid have the same polymatroid rank.
In particular, $f(B)=f(E)$ for a basis $B$ from the ground set $E$.
\end{thm}

\begin{proof}
Since the polymatroid rank $f$ is monotone, it is enough to prove
that $f(B)=f(E)$. For the maximal cycle $E'$ in the ground set $E$,
the complement $E\backslash E'$ consists of coloops. Choose a basis
$B$ and denote by $B'$ the intersection $B\cap E'$. Notice that
the set of coloops is the same $E\backslash E'=B\backslash B'$, and
we have the equality for the union: $E=B\cup E'$. Notice that $B'$
is a basis in the essential set $E'$, and
\[
f(E')=rk(E')=rk(B')=|B'|=f(B')
\]
by Proposition \ref{Proposition. Essential, rk(E)=00003Df(E)}, the
definition of a matroid rank, and Theorem \ref{Theorem. In cyclic, bases are BK}
correspondingly. Since the rank $f$ is submodular
\[
f(E')+f(B)\geq f(E)+f(B'),
\]
we obtain $f(B)\geq f(E)$. Since $f$ is monotone, we obtain the
equality: $f(B)=f(E).$
\end{proof}
\begin{cor}
\label{Corollary. rk(E)=00003Df(E) iff bases are BK}The polymatroid
rank of the ground set equals the induced matroid rank if and only
if bases are BK-sets.
\end{cor}

\begin{proof}
By Theorem \ref{Theorem. Basis have the same defect}, bases have
the same defect. Choose a basis $B$ and use Theorem \ref{Theorem. f(B)=00003Df(E)}.

$\boxed{\Rightarrow}$ $|B|=rk(E)=f(E)=f(B)$. $\boxed{\Leftarrow}$
$rk(E)=|B|=f(B)=f(E)$.
\end{proof}

\section{Polymatroid contraction}

\subsection{Polymatroid contraction by a BK-set}
\begin{defn}
The \textit{contraction} $P/K$ of a polymatroid $P$ by a BK-set
$K$ is a polymatroid $(E\backslash K,f_{K})$ with the rank function
$f_{K}(I)=f(I\cup K)-f(K).$
\end{defn}

We use the index to emphasize defects after a contraction: $\delta_{K}(I)=f_{K}(I)-|I|$.
\begin{lem}
\label{Lemma. Contraction by BK does not change defect}Contraction
by a BK-set $K$ does not change the defect, $\delta(I)=\delta_{K}(I)$.
\end{lem}

\begin{proof}
$\delta_{K}(I)=f_{K}(I)-|I\backslash K|=f(I)-f(K)-|I\backslash K|=\delta(I).$
\end{proof}
\begin{cor}
For the contraction $P/K$ of a polymatroid $P$ by a BK-set $K$,
there is a bijection between BK-sets in the contraction $P/K$ and
BK-sets containing $K$ in the polymatroid $P$.
\end{cor}

\begin{prop}
\label{Proposition. Polymatroid contraction by BK}The contraction
of a polymatroid by a BK-set corresponds to the contraction of the
induced matroid by the BK-set.
\end{prop}

\begin{proof}
$\boxed{\Rightarrow}$ Consider an independent subset $I$ in the
contraction $P/K$. Let us show that the union $I\cup K$ is independent
in $P$. This is equivalent to show that every subset $J\subseteq I\cup K$
has a non-negative defect. Since the set $I$ is independent in $P/K$
and $K$ is independent in $P,$ we have 
\begin{align*}
f_{K}(I\cap J)-|I\cap J| & \geq0,\\
f(K\cap J)-|K\cap J| & \geq0.
\end{align*}
Sum the inequalities and use the definition of a contraction to obtain
\[
f(K\cup J)-f(K)+f(K\cap J)-|J|\geq0.
\]
Recall that the function $f$ is submodular, $f(J)\geq f(K\cup J)-f(K)+f(K\cap J)$,
and conclude that the defect of $J$ is non-negative in $P$. Therefore,
the set is $I\cup K$ is independent in $P$.

$\boxed{\Leftarrow}$ Notice that independent sets in $P$ corresponds
to independent sets in $P/K$, because the contraction by a BK-set
preserves defects by Lemma \ref{Lemma. Contraction by BK does not change defect}.
\end{proof}
\begin{rem}
The contraction of a polymatroid by a BK-set is preferable for independent
polymatroids, because in that case, we do not have elements of zero
value after the contraction.
\end{rem}

\subsection{Polymatroid contraction by a cyclic flat}

To avoid elements of zero polymatroid value after contraction, we
define contractions by cyclic flats.
\begin{defn}
The \textit{contraction $P/F$} of a polymatroid $P$ by a cyclic
flat $F$ is a polymatroid $(E\backslash F,f_{F})$ with the rank
function $f_{F}(I)=f(I\cup F)-f(F).$
\end{defn}

\begin{lem}
\label{Lemma. Defect of a contraction by a cyclic flat}The contraction
$P/F$ by a cyclic flat has the defect $\delta_{F}(I)=\delta(I)-\delta(F)$.
\end{lem}

\begin{lem}
\label{Lemma. Contraction of a cyclic flat is cyclic flat}Polymatroid
contraction by a cyclic flat corresponds to the matroid contraction
by the cyclic flat on the lattice of cyclic flats. In particular,
the polymatroid contraction by a cyclic flat of a cyclic flat is a
cyclic flat.
\end{lem}

\begin{proof}
In matroids, the contraction of a cyclic flat by a cyclic flat is
a cyclic flat. By Corollary \ref{Corollary. cyclic, rk(I)=00003Df(I)},
the polymatroid rank equals the matroid rank on the lattice of cyclic
flats $\mathcal{Z}$ of the induced matroid. Then the two types of
contractions are equal on $\mathcal{Z}$
\[
f_{F}(I)=f(I\vee F)-f(F)=rk(I\vee F)-rk(F)=rk_{F}(I),
\]
where $I$ and $F$ are cyclic flats, and $rk_{F}(I)$ is the rank
of the matroid contraction of the induced matroid by $F$. We have
applied Theorem \ref{Theorem. f(B)=00003Df(E)} to use the equality
$f(I\cup F)=f(I\vee F)$. Hence the polymatroid contraction corresponds
to the matroid contraction on the lattice of cyclic flats $\mathcal{Z}$.
\end{proof}
Notice that the inclusion $I'\cup F'\subseteq(I\cup F)'$ can be strict
(the sign $'$ means the maximal cycle inside a set).
\begin{prop}
\label{Proposition. Formula for the maximal cyclic in a contraction}For
a set $I$ in the contraction $P/F$, its maximal cycle $I_{F}'$
in the contraction $P/F$ can be computed via the maximal cycle $(I\cup F)'$
inside the union $I\cup F$ in the polymatroid $P$:
\[
I'_{F}=(I\cup F)'\backslash F.
\]
\end{prop}

\begin{proof}
$\boxed{\subseteq}$ It follows from the inclusion $I'_{F}\cup F'\subseteq(I\cup F)'$
in the ground set $E$. $\boxed{\supseteq}$ By Lemma \ref{Lemma. Contraction of a cyclic flat is cyclic flat},
the contraction of the cyclic flat $(I\cup F)'$ is the cyclic flat
$(I\cup F)'\backslash F$ that lies inside the maximal cyclic flat
$I'_{F}$ of the set $I$.
\end{proof}
\begin{thm}
\label{Theorem. Contraction by a cyclic flat}The contraction of a
polymatroid by a cyclic flat corresponds to the contraction of the
induced matroid by the cyclic flat.
\end{thm}

\begin{proof}
Consider a polymatroid $P$ and a cyclic flat $F$ from the induced
matroid $M(P).$ It is enough to show that the diagram is commutative:
\[
\xymatrix{P\ar[r]\ar[d] & M(P)\ar[d]\\
P/F\ar[r] & M(P/F).
}
\]
The diagram is commutative if the two types of ranks coincide: $rk_{F}(I)=rk(I\cup F)-rk(F),$
where $F$ is a cyclic flat and $I$ is a set in the complement $E\backslash F$.
Use Corollary \ref{Corollary. cyclic, rk(I)=00003Df(I)}, Proposition
\ref{Proposition. New formulat for rank}, Proposition \ref{Proposition. Formula for the maximal cyclic in a contraction}
and definitions:
\begin{align*}
rk_{F}(I) & =f_{F}(I'_{F})+|I\backslash I'_{F}|=f_{F}((I\cup F)'\backslash F)+|I\backslash\left((I\cup F)'\backslash F\right)|=\\
 & =f((I\cup F)')-f(F)+|(I\cup F)\backslash(I\cup F)'|=rk(I\cup F)-rk(F).
\end{align*}
Notice that $(I\cup F)\backslash(I\cup F)'=I\backslash\left((I\cup F)'\backslash F\right)=I\backslash I'_{F}$,
because we have the inclusion for cyclic flats $F\subseteq(I\cup F)'$.
\end{proof}
\begin{rem}
If we build a contraction by a flat $F$ which is not cyclic, then
the two types of ranks from the theorem above can differ.
\end{rem}

\begin{cor}
\label{Corollary. Contraction by maximal cyclic is independent}For
the maximal cycle $E'$ of the induced matroid from a polymatroid
$P$, the contraction of $P/E'$ is independent, and the induced matroid
$M(P/E')$ is a boolean algebra.
\end{cor}

\begin{proof}
The maximal cycle $E'$ is the maximal cyclic flat. By Theorem \ref{Theorem. Contraction by a cyclic flat}
the contraction is well-defined. By Lemma \ref{Lemma. Defect of a contraction by a cyclic flat},
the defects of all sets in the polymatroid $P/E'$ are positive.
\end{proof}

\section{BK-sets}

\subsection{Birkhoff poset of a BK-set}
\begin{defn}
An \textit{order ideal} of a poset $\mathtt{P}$ is a subposet $\mathtt{I}$
of $\mathtt{P}$ such that if $\beta\in\mathtt{I}$ and $\alpha\leq\beta$,
then $\alpha\in\mathtt{I}$. For an element $\alpha$ from a poset,
the \textit{principal order ideal }$(\alpha)$ is an order ideal of
all elements that are not greater than $\alpha.$
\end{defn}

In 1937, Birkhoff proved \cite{birkhoff_rings_1937} that every finite
distributive lattice is isomorphic to the lattice of order ideals
of some poset $\mathtt{P}.$
\begin{thm}
\label{Theorem. The poset can be arbitratry}Any finite poset $\mathtt{P}$
can be a Birkhoff poset for BK-sets of a realizable BK-polymatroid.
\end{thm}

\begin{proof}
For a field of the characteristic zero, the incidence algebra for
the poset $\mathtt{P}$ over $n$ elements (see \cite{stanley_enumerative_2011})
is isomorphic to the subalgebra of upper matrices $L=\{(m_{ij})\in\mathrm{Mat}_{n,n}\,|\;m_{ij}=0$
if $i\nleq j,\;i,j\in\mathtt{P}\}.$ The algebra of upper matrices
admits the decomposition on columns as a vector space $L=\underset{j\in P}{\oplus}L_{j},$
where $L_{j}=\langle m_{ij}\,|\;i\leq j\rangle.$ This decomposition
provides a tuple of vector subspaces and a realiztion of a BK-polymatroid
$P$. By the construction, every order ideal $\mathtt{I}\subseteq\mathtt{P}$
corresponds to a BK-set, and vice versa. Hence the lattice of order
ideals of $\mathtt{P}$ is isomorphic to the lattice of BK-sets of
the polymatroid $P$.
\end{proof}

\subsection{Poset partition of a reducible BK-polymatroid}
\begin{prop}
In an independent polymatroid, BK-sets satisfy the modular law:

$f(I)+f(J)=f(I\cup J)+f(I\cap J)$.
\end{prop}

\begin{proof}
By Lemma $\ref{Lemma. The union and intersection of BK-sets are BK-sets}$,
unions and intersections of BK-sets $I$ and $J$ are BK-sets:

$f(I)+f(J)-f(I\cup J)-f(I\cap J)=\delta(I)+\delta(J)-\delta(I\cup J)-\delta(I\cap J)=0.$
\end{proof}
\begin{lem}
\label{Lemma. Irreducible BK-sets don't intersect}In a reducible
BK-polymatroid, irreducible BK-sets do not intersect.
\end{lem}

\begin{proof}
The empty polymatroid can be considered as a BK-polymatroid: the defect
of the empty set is zero, because the polymatroid rank function $f$
is normalized, $f(\varnothing)=0$.
\end{proof}
\begin{defn}
For a BK-polymatroid $P$, a \textit{BK-filtration} is a sequence
of polymatroids $P_{1}\subset P_{2}\subset...\subset P_{k}=P$ with
ground sets $E_{1}\subset E_{2}\subset...\subset E_{k}=E$ such that
all contractions $P_{i}/E_{i-1}$ are BK-polymatroids. A BK-filtration
is \textit{maximal} if all BK-polymatroids $P_{i}/E_{i-1}$ are irreducible.
\end{defn}

\begin{cor}
For a BK-polymatroid, the number of subsets in a maximal BK-filtration
equals the cardinality of the Birkhoff poset.
\end{cor}

For a BK-polymatroid $P,$ the Birkhoff poset $\mathtt{P}$ defines
a partition of the ground set $E$ as follows. Every element $\alpha$
of the poset $\mathtt{P}$ corresponds to some subset $K_{\alpha}\subseteq E$,
and every order ideal $\mathtt{I}$ of the poset $\mathtt{P}$ corresponds
to some BK-set $K_{\mathtt{I}}=\underset{\alpha\in\mathtt{I}}{\sqcup}K_{\alpha}$.
Denote by $\hat{P}_{\alpha}$ the polymatroid minor $P_{(\alpha)}/K_{(\alpha)\backslash\alpha}$,
where $P_{(\alpha)}$ is the restriction of the polymatroid $P$ on
the BK-set $K_{(\alpha)}$.
\begin{thm}
\label{Theorem. BK-partition}The ground set of a reducible BK-polymatroid
admits a unique partition into subsets $E=\underset{\alpha\in\mathtt{P}}{\sqcup}K_{\alpha}$
such that all polymatroids $\hat{P}_{\alpha}$ are irreducible BK-polymatroids
for every element $\alpha$ from the Birkhoff poset $\mathtt{P}$.
\end{thm}

\begin{proof}
Minimal elements of the poset $\mathtt{P}$ correspond to irreducible
BK-sets of the ground set $E$. By Lemma \ref{Lemma. Irreducible BK-sets don't intersect},
irreducible BK-sets do not intersect in an independent set. Every
element $\alpha$ from the poset $\mathtt{P}$ is associated with
the subset $K_{(\alpha)}\backslash K_{(\alpha)\backslash\alpha}$,
where $(\alpha)$ and $(\alpha)\backslash\alpha$ are order ideals
that correspond to BK-sets $K_{(\alpha)}$ and $K_{(\alpha)\backslash\alpha}$.
The polymatroid $\hat{P}_{\alpha}$ is the contraction by the BK-set
$K_{(\alpha)\backslash\alpha}$ for the restriction of the polymatroid
$P$ on the BK-set $K_{(\alpha)}$, and it is a BK-polymatroid by
Lemma \ref{Lemma. Contraction by BK does not change defect} for every
element $\alpha$ . The BK-polymatroid $\hat{P}_{\alpha}$ is irreducible,
because, otherwise, it would not correspond to an element in the poset
$\mathtt{P}$. The partition is unique by the construction.
\end{proof}

\section{Realizable polymatroids}

A polymatroid $P$ is \textit{realizable} over a field if there exists
a \textit{realization} via a tuple of vector subspaces $(L_{1},...,L_{n})$
in a vector space $V$ and the rank function $f(I)=dim\,L_{I}$, $L_{I}=\underset{i\in I}{\sum}L_{i}$. 

There is one more characterization of independent subsets in realizable
polymatroids via their realizations.
\begin{thm}
A tuple of vector subspaces $(L_{1},...,L_{n})$ corresponds to an
independent realizable polymatroid if and only if there exists a linearly
independent set of vectors $\upsilon_{1}\in L_{1}$, ..., $\upsilon_{n}\in L_{n}$.
\end{thm}

\begin{proof}
See Theorem 4, \cite{khovanskii_newton_2016}.
\end{proof}

\subsection{Partition of a vector space by a realizable polymatroid}

A \textit{dual realization }of a polymatroid $P$ is called a tuple
of subspaces $(L_{1},...,L_{n})$ with the rank function $f(I)=codim\,L_{I}$,
$L_{I}=\underset{i\in I}{\cap}L_{i}$. Both realizations correspond
to the same polymatroid.

In this subsection, we consider polymatroids with a lattice of flats
$\mathcal{L}$ and their dual realizations.
\begin{lem}
\label{Lemma. For a point x, I is flat}For a point $x\in V,$ the
set $I=\{i\in[n]\,|\;L_{i}\ni x\}$ is flat.
\end{lem}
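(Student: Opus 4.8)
The claim is that for a point $x \in V^{\vee}$, the index set $I = \{i \in [n] \mid L_i^{\perp} \ni x\}$ is a flat of the polymatroid $P = P^{\perp}$. My plan is to use the characterization of flats as the maximal sets among those of a given rank, equivalently: $I$ is a flat iff for every index $j \notin I$ we have $rk_P(I \cup j) > rk_P(I)$. Working in the dual realization, $rk_P(I) = \mathrm{codim}\, L_I^{\perp}$ where $L_I^{\perp} = \bigcap_{i \in I} L_i^{\perp}$, so adding $j$ fails to increase rank exactly when $L_j^{\perp} \supseteq L_I^{\perp}$.

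First I would observe that by definition of $I$, the point $x$ lies in $L_i^{\perp}$ for every $i \in I$, hence $x \in L_I^{\perp} = \bigcap_{i\in I} L_i^{\perp}$ (with the convention that the empty intersection is all of $V^{\vee}$, so $I$ may be empty and the statement still holds). Now take any $j \notin I$. By definition of $I$, the fact that $j \notin I$ means $x \notin L_j^{\perp}$. Combining these two facts: $x$ is an element of $L_I^{\perp}$ but not of $L_j^{\perp}$, which immediately shows $L_I^{\perp} \not\subseteq L_j^{\perp}$. Therefore $L_{I \cup j}^{\perp} = L_I^{\perp} \cap L_j^{\perp}$ is a proper subspace of $L_I^{\perp}$, so $\mathrm{codim}\, L_{I\cup j}^{\perp} > \mathrm{codim}\, L_I^{\perp}$, i.e. $rk_P(I \cup j) > rk_P(I)$.

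Since this holds for every $j \notin I$, the set $I$ is maximal among all subsets of $[n]$ with rank $rk_P(I)$: any strict superset $J \supsetneq I$ contains some $j \notin I$, and monotonicity together with the strict inequality just established gives $rk_P(J) \geq rk_P(I \cup j) > rk_P(I)$. Hence $I$ is a flat of $P$, as required.

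There is essentially no obstacle here: the argument is a direct unwinding of the definitions of $I$, of the dual realization's rank function, and of "flat." The only point requiring a moment's care is the edge case where $I = \varnothing$ (when $x$ lies in no $L_i^{\perp}$), handled by the standard convention that the empty intersection equals the ambient space $V^{\vee}$; then for every $j$, $x \notin L_j^{\perp}$ forces $L_j^{\perp} \subsetneq V^{\vee}$, so adding any $j$ strictly increases the rank from $0$, and $\varnothing$ is indeed a flat. One could alternatively phrase the whole proof via the closure operator, noting that $cl_P(I)$ is the intersection of all flats containing $I$ and showing directly that $i \in cl_P(I)$ forces $L_i^{\perp} \supseteq L_I^{\perp} \ni x$, hence $i \in I$; but the maximality formulation above is the most economical.
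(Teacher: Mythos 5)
Your proof is correct and rests on the same key observation as the paper's: since $x\in L_{I}^{\perp}$ but $x\notin L_{j}^{\perp}$ for any $j\notin I$, adjoining $j$ strictly shrinks the intersection and hence strictly increases the rank. The paper packages this as a contradiction via the closure operator ($j\in cl_P(I)\setminus I$ would force $x\notin L_{cl_P(I)}^{\perp}=L_I^{\perp}$), while you argue directly from the ``maximal among sets of its rank'' characterization, but the substance is the same.
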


\begin{proof}
If the set $I$ is not flat, there exists an element $j\in F\backslash I$
such that $F=cl_{P}(I).$ Then the point $x$ does not lie in the
subspaces $x\notin L_{j}$ and $x\notin L_{F}$. We obtain the contradiction
since $L_{I}\neq L_{F}$.
\end{proof}
\begin{lem}
\label{Lemma. B_F}The set of points in the vector space $V$ corresponding
to the same flat $F$ is
\[
B_{F}=L_{F}\backslash\underset{F'\in\mathcal{L}\backslash(F)}{\cup}L_{F'},
\]
where $(F)$ is the principal order ideal in the lattice of flats
$\mathcal{L}$.
\end{lem}

\begin{proof}
The points corresponding to a flat $F$ belong to the subspace $L_{F}$.
However, we need to remove points that do not correspond to the flat
$F$. For an inclusion of flats $F'\subset F$, there exists an inverse
inclusion of subspaces $L_{F'}\supset L_{F}$. Hence the subspace
$L_{F}$ lies in the subspace $L_{F'}$ only for flats $F'$ from
the principal order ideal $(F)$ in the lattice $\mathcal{L}$, and
it does not lie in the subspaces of other flats. This determines the
set $B_{F}=L_{F}\backslash\underset{F'\in\mathcal{L}\backslash(F)}{\cup}L_{F'}.$
\end{proof}
\begin{prop}
\textup{\label{Proposition. Polymatroid partition of a vector space}}For
a tuple of subspaces $(L_{1},...,L_{n})$ from a vector space $V$
over an infinite field, there exists a unique partition of the space
$V$ into constructible subsets $B_{F}$, enumerated by the lattice
of flats $\mathcal{L}$ of the polymatroid $P$, $V=\underset{F\in\mathcal{L}}{\sqcup}B_{F}$.
\end{prop}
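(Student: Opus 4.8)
The plan is to build the decomposition directly from Lemma \ref{Lemma. For a point x, I is flat}. For each point $x \in V^{\lor}$, associate the set $I(x) = \{i \in [n] \mid L_i^{\perp} \ni x\}$, which is a flat by that lemma. This assignment $x \mapsto I(x)$ is well-defined on all of $V^{\lor}$ and takes values in $\mathcal{L}$, so the fibers $B_F = \{x \in V^{\lor} \mid I(x) = F\}$ for $F \in \mathcal{L}$ automatically form a partition $V^{\lor} = \sqcup_{F \in \mathcal{L}} B_F$ (some parts may be empty; that is harmless). So the only real content is to check that each $B_F$ is constructible.

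The key step is to give $B_F$ an explicit description in terms of the configuration. By unwinding the definition, $x \in B_F$ exactly when $x$ lies in every $L_i^{\perp}$ with $i \in F$ and in no $L_j^{\perp}$ with $j \notin F$; that is,
\[
B_F = L_F^{\perp} \setminus \bigcup_{j \notin F} \bigl(L_F^{\perp} \cap L_j^{\perp}\bigr).
\]
Here $L_F^{\perp} = \cap_{i \in F} L_i^{\perp}$ is a linear subspace, hence closed; and each $L_F^{\perp} \cap L_j^{\perp}$ is again a linear subspace. Thus $B_F$ is a Zariski-closed set with finitely many closed subsets removed, which is constructible by definition. (Over an infinite field the nonemptiness of $B_F$ for $F$ in the image is the point where infinitude matters: if $F = cl(F)$ is a flat realized as $I(x)$ for some $x$, then none of the proper subspaces $L_F^{\perp}\cap L_j^{\perp}$, $j\notin F$, can equal $L_F^{\perp}$ — indeed if $L_F^{\perp}\cap L_j^{\perp} = L_F^{\perp}$ then $L_{F\cup j}^{\perp} = L_F^{\perp}$, forcing $j$ into the closure of $F$, a contradiction — and a vector space over an infinite field is never a finite union of proper subspaces.)

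The main obstacle is mostly bookkeeping: verifying that the complement being removed is exactly the union over $j \notin F$ of the \emph{proper} subspaces $L_F^{\perp} \cap L_j^{\perp}$, and that these are proper precisely when $F$ is a flat, so that removing them still leaves a dense constructible set. One has to be careful that the partition is indexed by the flats of $P^{\perp}$ (equivalently of $P$, since $\mathcal{L}_P = \mathcal{L}_{P^{\perp}}$ by the preceding lemma) and that the closure operation $cl$ used in Lemma \ref{Lemma. For a point x, I is flat} matches the closure in $\mathcal{L}$. Once the explicit formula for $B_F$ is in hand, constructibility is immediate and the disjointness and covering properties follow formally from the fact that $I(x)$ is a single well-defined flat for each $x$.
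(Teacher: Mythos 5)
Your proof is correct and follows essentially the same route as the paper: use Lemma \ref{Lemma. For a point x, I is flat} to map each point to its flat, take fibers, and exhibit each fiber as a closed subspace minus finitely many closed subsets. Your explicit description $B_F = L_F^{\perp} \setminus \bigcup_{j \notin F}\bigl(L_F^{\perp}\cap L_j^{\perp}\bigr)$ is equivalent to the paper's $B_F = L_F^{\perp} \setminus \bigcup_{F' \in \mathcal{L}\setminus(F)} L_{F'}^{\perp}$ (you excise point-by-point over non-elements, the paper excises subspace-by-subspace over flats not below $F$; both carve out exactly $\gamma^{-1}(F)$). One genuine value-add in your write-up: you spell out why infinitude of the field matters, namely that $L_F^{\perp}\cap L_j^{\perp}$ is a proper subspace of $L_F^{\perp}$ whenever $F$ is a flat and $j\notin F$ (else $j$ would lie in $cl(F)=F$), and a vector space over an infinite field is never a finite union of proper subspaces, so each $B_F$ is nonempty and the enumeration by $\mathcal{L}$ is faithful. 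The paper merely asserts this; your argument closes the gap.
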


\begin{proof}
Since every point of the vector space $V$ corresponds to a unique
flat, Lemma \ref{Lemma. For a point x, I is flat} provides a set-theoretic
map $\gamma:V\rightarrow\mathcal{L}$. This map defines a partition
of the space into subsets $\gamma^{-1}(F)=B_{F}$ by Lemma \ref{Lemma. B_F}.
For a finite field, the set $B_{F}$ can be empty. For an infinite
field the set $B_{F}$ is never empty as a finite intersection of
dense subsets $B_{F}=\underset{F'\in\mathcal{L}\backslash(F)}{\cap}(L_{F}\backslash L_{F'}).$
\end{proof}

\subsection{Quotients and realizable contractions}

In this subsection, we use a realization $\mathsf{n}=(L_{1},...,L_{n})$
of a polymatroid with the rank function $f(I)=dim\,\langle\mathsf{n}_{I}\rangle$,
where a subtuple $\mathsf{n}_{I}=(L_{i},\;i\in I)$ and its sum $\langle\mathsf{n}_{I}\rangle=\underset{i\in I}{\sum}L_{i}$.
\begin{defn}
For subtuples $\mathsf{n}_{I}$ and $\mathsf{n}_{J}$ from a tuple
of vector subspaces $\mathsf{n}$ in a vector space $V$, the \textit{quotient
tuple} is defined as $\mathsf{n}_{J}/\mathsf{n}_{I}=\pi(\mathsf{n}_{J\backslash I})$
for the projection $\pi:V\rightarrow V/\langle\mathsf{n}_{I}\rangle$.
\end{defn}

\begin{prop}
\label{Proposition. Polymatroid contraction and quotient}Let a tuple
of subspaces $\mathsf{n}$ be a realization of a polymatroid $P$;
then the polymatroid contraction $P/I$ for a subset $J$ by a subset
$I$ admits a realization as the quotient $\mathsf{n}_{J}/\mathsf{n}_{I}$.
\end{prop}

\begin{proof}
The polymatroid rank function for the contraction $P/I$ is equals
to the dimension of the quotient tuple of a realization $\mathsf{n}$:
\begin{align*}
rk_{I}(J) & =rk(I\cup J)-rk(I)=dim\,\langle\mathsf{n}_{I\cup J}\rangle-dim\,\langle\mathsf{n}_{I}\rangle=dim\,\langle\mathsf{n}_{I\cup J}\rangle/\langle\mathsf{n}_{I}\rangle=\\
 & =dim\,\langle\mathsf{n}_{J\backslash I}\rangle/\langle\mathsf{n}_{I}\rangle=dim\,\pi(\langle\mathsf{n}_{J\backslash I}\rangle)=dim\,\langle\pi(\mathsf{n}_{J\backslash I})\rangle=dim\,\langle\mathsf{n}_{J}/\mathsf{n}_{I}\rangle,
\end{align*}
where $I$ and $J$ are subsets from the ground set of the polymatroid
$P$.
\end{proof}
\begin{rem}
1) For a polymatroid $P$, we have a coherence between a polymatroid
realization $\mathsf{n}$, a polymatroid contraction $P/I$, and the
quotient $\mathsf{n}/\mathsf{n}_{I}$:
\[
\xymatrix{P\ar[rr]^{realization}\ar[d]_{contraction} &  & \mathsf{n}\ar[d]^{quotient}\\
P/I\ar[rr]^{realization} &  & \mathsf{n}/\mathsf{n}_{I}.
}
\]

2) We suggest the separate notion of quotient tuples of vector subspaces
to emphasize that polymatroid contractions of realizable polymatroids
can be expressed as quotients in linear algebra, in contrast to the
abstract case.
\end{rem}

\section{\label{Section. Disctriminants}Discriminants of polynomial systems}

The structure of the induced matroid on a tuple of Newton polytopes
sets up a new approach to the study of polynomial systems.

The notion of essential sets arose from the work of Sturmfels about
resultants \cite{sturmfels_newton_1994}. The idea of BK-sets arose
from the Kouchnirenko-Bernstein theorem \cite{bernshtein_number_1975}
and the Minkowski criteria about zeroing mixed volumes. The notion
of polymatroid contractions arose from the substitution of a solution
from a polynomial subsystem into the complement subsystem.

Corollary \ref{Corollary. Defect of circuits is -1} links mixed discriminants
\cite{cattani_mixed_2013} and induced matroids with a unique circuit.
Theorem \ref{Theorem. Maximal essential =00003D minimal defect} provides
a new characterization of the tuple of Newton polytopes for resultants
\cite{sturmfels_newton_1994}. In a generic case, the defect for the
basis of the induced matroid (Theorem \ref{Theorem. Basis have the same defect})
equals the expected dimension of the set of solutions for the system
in the non-empty case.

For a BK-tuple of Newton polytopes, the Birkhoff poset provides the
optimal algorithm for solving the polynomial systems. First, we solve
subsystems corresponding to the minimal elements of the poset. Then
substitute solutions into the remaining system and repeat the procedure.
This algorithm already appeared in \cite{brysiewicz_solving_2021}
and the Macaulay2 package $\mathtt{DecomposableSparseSystems}$.

The polymatroid partition of a vector space in Proposition \ref{Proposition. Polymatroid partition of a vector space}
is motivated by the Esterov conjecture about irreducibility of discriminants
\cite{esterov_galois_2019,pokidkin_components_2025}. The conjecture
is crucial for a general description of discriminants for BK-tuples
of Newton polytopes \cite{pokidkin_components_2025}. The combinatorial
part of the description is encoded by Theorem \ref{Theorem. BK-partition}.

\subsubsection*{Acknowledgment}

I am grateful to Alexander Esterov for supervision and to my family
for support.

\subsubsection*{Funding}

The study was partially supported by the HSE University Basic Research
Program.

\begin{adjustwidth}{-0.0in}{-0.0in}
	\sloppy
	\printbibliography
	\fussy
\end{adjustwidth}

National Research University Higher School of Economics, Russian Federation

AG Laboratory, HSE, 6 Usacheva str., Moscow, Russia, 119048

\-

Email: vppokidkin@hse.ru
\end{document}